\def\paren#1{\left( #1 \right)}
\def\acc#1{\left\{ #1 \right\}}
\renewcommand{\le}{\leqslant}
\renewcommand{\ge}{\geqslant}
\theoremstyle{plain}
\newtheorem{theorem}{Theorem}
\newtheorem{lemma}[theorem]{Lemma}
\theoremstyle{definition}
\theoremstyle{remark}
\newtheorem{remark}[theorem]{Remark}
\title{Words avoiding the morphic images of most of their factors}
\author{Pascal Ochem \and Matthieu Rosenfeld\thanks{This work is supported by the ANR project CADO (ANR-24-CE48-3758-01).}}
\affiliation{
  LIRMM, CNRS, Universit\'e de Montpellier, Montpellier, France.}
\keywords{combinatorics on words}
\begin{document}
\publicationdata{vol. 27:3}{2025}{13}{10.46298/dmtcs.15919}{2025-06-23; 2025-06-23; 2025-09-08; 2025-09-14}{2025-09-17}

\maketitle

\vspace{0mm}

\begin{abstract}
We say that a finite factor $f$ of a word $w$ is \emph{imaged} if there exists a non-erasing morphism $m$,
distinct from the identity, such that $w$ contains $m(f)$.
We show that every infinite word contains an imaged factor of length at least 6 and that 6 is best possible.
We show that every infinite binary word contains at least 36 distinct imaged factors and that 36 is best possible.
\end{abstract}

\section{Introduction}\label{sec:intro}
Following the construction by Fraenkel and Simpson~\cite{Fraenkel(1995)} of an infinite binary word containing only
the squares \texttt{00}, \texttt{11}, and \texttt{0101}, some results in the literature have shown that
infinite binary words can also contain finitely many distinct regularities that are more general 
than squares. In particular, the following result considers generalizations of squares such that one half of the pseudosquare
is allowed to be a morphic image of the other half, for some non-erasing morphism.

\begin{theorem}~\cite{Ng(2019)}\label{thm10prime}
There exists an infinite binary word that avoids all factors of the form $fm(f)$ and $m(f)f$,
for all non-erasing binary morphisms $m$, with $|f|\ge5$.
\end{theorem}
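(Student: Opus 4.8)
The plan is to exhibit an explicit infinite binary word $w$, most naturally the fixed point $w=\phi^{\omega}(0)$ of a carefully chosen (uniform) morphism $\phi$, and to prove that it avoids every factor $fm(f)$ and $m(f)f$ with $|f|\ge 5$. The obstacle is that the forbidden family is infinite in two independent ways: the length $|f|$ is unbounded and, for each $f$, the non-erasing morphism $m$ ranges over infinitely many choices, so $|m(f)|$ is unbounded as well. The proof therefore has two phases: a reduction that collapses this doubly-infinite family to a finite list, and a finite verification.

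First I would extract the constraints forced by the length-preserving morphisms, which already pin down the shape of $\phi$. Taking $|m(0)|=|m(1)|=1$ yields three relevant maps: the identity forces $w$ to avoid squares $ff$ of period $\ge 5$; the letter-swap forces it to avoid the \emph{complement squares} $f\bar{f}$ of period $\ge 5$ (so plain Thue--Morse, which contains $\bar{f}f$ of period $2^{k}$, is \emph{not} admissible, and $\phi$ must be genuinely special); and the two constant maps force $w$ to avoid $f0^{|f|}$ and $f1^{|f|}$, i.e.\ to have runs of each letter bounded by $4$. I would additionally require $\phi$ to produce a $w$ with no factor $01010$ or $10101$ (bounded alternation). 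The payoff is a clean dichotomy: every factor $f$ of $w$ with $|f|\ge 5$ must contain $00$ or $11$, since a length-$5$ factor with neither would be alternating.

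This dichotomy drives the main reduction through a simple square argument. If $f$ contains $00$, then $m(f)$ contains $m(0)m(0)$, a square of period $|m(0)|$; since $w$ avoids squares of period $\ge 5$, this forces $|m(0)|\le 4$, and symmetrically $11$ in $f$ forces $|m(1)|\le 4$. Hence whenever $f$ contains both $00$ and $11$, both image lengths are bounded and only finitely many $m$ survive. The residual difficulty, which I expect to be the crux, is the case where $f$ contains only one of the two blocks (say $00$ but no $11$, so its $1$'s are isolated) while $|f|$ itself stays unbounded; here the other image length $|m(1)|$ is not controlled by a single square. To handle these cases I would exploit the self-similar structure of $w$: using the recognizability (synchronization) of $\phi$, an occurrence of $fm(f)$ with large parameters desubstitutes to a strictly smaller occurrence $f'm'(f')$ of the same shape, and iterating this descent reduces both $|f|$ and $\max(|m(0)|,|m(1)|)$ below explicit constants. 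Tracking how the boundaries of $m(f)$ align with the $\phi$-block structure, so that the descent stays inside the pattern class and the bounded boundary mismatches are absorbed into finitely many base cases, is the main technical obstacle.

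Finally, with the problem reduced to a finite list of candidate pseudosquares of bounded length and bounded morphism images, I would verify by a direct (backtracking or automaton-based) computation that none occurs in a long prefix of $w$; since $\phi$ is primitive, $w$ is uniformly recurrent with bounded appearance, so checking a sufficiently long prefix certifies avoidance throughout $w$.
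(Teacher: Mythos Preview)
The paper does not prove this theorem. Theorem~\ref{thm10prime} is quoted, with citation, from \cite{Ng&Ochem&Rampersad&Shallit:2019}; it serves only as motivation for the present paper's questions, and no argument for it appears anywhere in the text. There is therefore nothing here to compare your proposal against.

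That said, a brief comment on your sketch, since the paper's own results (Theorems~\ref{yes7} and~\ref{yes36}) illustrate the style of argument one would expect for Theorem~\ref{thm10prime} as well. Your reduction via squares is sound: if $f$ contains \texttt{00} then $m(f)$ contains the square $m(\texttt{0})m(\texttt{0})$, bounding $|m(\texttt{0})|$. But the ``descent by desubstitution'' step is a genuine gap. You assert that an occurrence of $fm(f)$ in $w=\phi^{\omega}(0)$ desubstitutes to an occurrence of some $f'm'(f')$ of the same shape; there is no reason this should hold. Desubstituting the factor $fm(f)$ yields a preimage word $u$ with $\phi(u)$ covering $fm(f)$ up to boundary shifts, but nothing forces $u$ itself to decompose as $f'm'(f')$ for any morphism $m'$, because the block boundaries of $\phi$ need not respect the boundary between $f$ and $m(f)$, nor the internal periodic structure of $m(f)$. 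The technique the present paper actually uses for its own theorems is different and avoids this issue: rather than a pure fixed point and a descent, one takes the image of an arbitrary ternary $\tfrac74^+$-free word under a synchronizing uniform morphism, invokes Lemma~\ref{sync} to obtain strong $(\beta^+,n)$-freeness of the image, and then the bound on repetition periods directly caps \emph{both} $|m(\texttt{0})|$ and $|m(\texttt{1})|$ once one has arranged that every relevant $f$ contains both \texttt{00} and \texttt{11}. This collapses the problem to a finite check in one stroke, with no inductive descent needed.
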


Notice that classical squares are indeed forbidden in Theorem~\ref{thm10prime} when $m$ is the identity.
Notice also that $m$ is required to be non-erasing to prevent that $m(f)$ is empty.

In this paper, we investigate whether we can remove the constraint that $f$ and $m(f)$ are adjacent.
That is, does there exist an infinite word $w$ such that $f$ and $m(f)$ are both factors of $w$ only for finitely many $f$'s?
The answer is obviously negative because $m$ can be the identity, that is, the morphism such that
$m(i)=i$ for every letter $i$ in the alphabet of $w$.
This implies that for every $n\ge1$, the prefix $p_n$ of length~$n$ of $w$ and the word $m(p_n) = p_n$ are both factors of $w$.
To get rid of such a triviality, we require that $m$ is not the identity.
Thus, we say that a morphism is \emph{admissible} if it is non-erasing and distinct from the identity.
Then we say that a finite factor $f$ of a word $w$ is \emph{imaged}
if there exists an admissible morphism $m$ such that $w$ contains $m(f)$. 

\begin{remark}{\ }\label{rem}
\begin{itemize}
\item[(i)] If $f$ is imaged in $w$, then every factor of $f$ is imaged in $w$.
\item[(ii)] If $f$ is a factor of $w$ that does not contain every letter of $w$, then $f$ is imaged in $w$.
\item[(iii)] In particular, the empty word $\varepsilon$ is imaged in every infinite word.
\end{itemize}    
\end{remark}

To prove \Cref{rem}.(ii), let $f$ be a factor of $w$ that does not contain the letter~$a$ of $w$.
Let $m$ be the admissible morphism such that $m(a)=aa$ and $m(i)=i$ for every letter $i$ of $w$ other than $a$.
Then $m(f)=f$ even if $m$ is not the identity. So $f$ is imaged since $w$ contains $f$ and $m(f)$.

Our results consider the length and the number of imaged factors that are unavoidable in infinite binary words.
First, we prove that long imaged factors can be avoided.
\begin{theorem}\label{yes7}
There exist exponentially many binary words that avoid imaged factors of length 7.
\end{theorem}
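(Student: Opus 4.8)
The plan is to fix a finite family of admissible morphisms that actually need to be considered, and then to exhibit an exponentially large family of binary words on which none of these morphisms can image a length-7 factor. I would begin by recording what the simplest morphisms force. The constant morphism $m(0)=m(1)=s$ with $s$ nonempty sends any length-7 factor $f$ to $s^7$, so a word containing a $7$th power has \emph{every} length-7 factor imaged; hence the words I build must be $7$th-power-free. The letter swap $m(0)=1,\ m(1)=0$ sends $f$ to its complement $\overline{f}$, and the alternating factor $0101010$ is sent by any non-erasing $m$ to a word containing the cube $\paren{m(0)m(1)}^3$; this pushes me to work inside cube-free (or low-exponent power-free) binary words from the outset.

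The crucial reduction is a \textbf{bounding lemma}: in a sufficiently power-free word, an admissible morphism $m$ can image a length-7 factor $f$ only if $\abs{m(0)}$ and $\abs{m(1)}$ are below an absolute constant. The mechanism is that a majority letter $z$ of $f$ occurs at least four times among the seven positions, so $m(f)$ packs at least four copies of $m(z)$ into a window of length at most $7\max\paren{\abs{m(0)},\abs{m(1)}}$; if $\abs{m(z)}$ were large, these near-repetitions would be forced to create a power above the tolerated exponent. Proving this uniformly over all gap patterns between the occurrences of $z$ is the \emph{main obstacle}, and it is what turns an a priori infinite search over morphisms into a finite one: afterwards only the $128$ length-7 factors and finitely many bounded morphisms remain to be considered.

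Finally I would produce the words. Note first that avoiding an imaged factor is a \emph{global} condition---it asks whether $f$ and some $m(f)$ both occur, possibly far apart---so the language is closed under taking factors but need not be defined by finitely many forbidden factors (the swap already compares distant occurrences of $f$ and $\overline{f}$). I would therefore proceed constructively: choose a morphism $g$ from an exponentially large source language $L$ (such as the square-free ternary words, or the cube-free binary words) into binary words, arranged so that the images are power-free, so that their set of factors is rigidly controlled by $g$, and so that this factor set is complement-asymmetric. Using the bounding lemma, it then suffices to check the finitely many candidate images $m(f)$ against the (structurally described) factor set of the whole family $g(L)$, verifying in particular that no complement $\overline{f}$ of a length-7 factor ever appears. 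Since $L$ has more than $c^n$ words of length $n$ for some $c>1$ and $g$ is injective on $L$, the images give more than $c'^{\,n}$ distinct binary words of each length that avoid imaged factors of length 7, which proves the theorem.
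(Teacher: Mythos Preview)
Your outline matches the paper's proof: construct the binary words as images of ternary $\tfrac74^+$-free words under a fixed uniform morphism (hence exponentially many), arrange strong power-freeness and complement-asymmetry, bound $|m(\texttt{0})|$ and $|m(\texttt{1})|$ so that only finitely many morphisms survive, and then check all remaining pairs $(f,m)$ against a short list of forbidden factors.

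The one place where the paper is sharper than your sketch is the bounding step. Your majority-letter argument only directly bounds $|m(z)|$ for the letter $z$ occurring $\ge 4$ times in $f$; the minority letter $y$ may occur just twice and non-adjacently (e.g.\ $f=\texttt{0010100}$), so no square $m(y)m(y)$ is forced and you give no mechanism to bound $|m(y)|$. The paper sidesteps this by \emph{designing} the morphism so that the image word satisfies two extra properties: its only squares are $\texttt{00},\texttt{11},\texttt{0101},\texttt{1010}$, and every length-7 factor contains either $\texttt{0101}$, or $\texttt{1010}$, or both $\texttt{00}$ and $\texttt{11}$. The second property guarantees that \emph{both} letters of any length-7 factor $f$ sit inside a square of $f$; the first then forces $|m(\texttt{0})|\le 2$ and $|m(\texttt{1})|\le 2$ at once. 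This collapses the search to ten candidate morphisms and twelve candidate factors, finished by a direct check.
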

Then we prove that the length 7 is sharp in the previous result.
\begin{theorem}\label{not6}
Every infinite word over a finite alphabet contains an imaged factor of length 6.
\end{theorem}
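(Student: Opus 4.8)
The plan is to argue by contradiction: assume $w$ is an infinite word over a finite alphabet $B$ with no imaged factor of length $6$, and derive a contradiction. The first step exploits Remark~\ref{rem}(2): if some length-$6$ factor of $w$ omitted a letter of $B$, it would already be imaged. Hence every length-$6$ factor must contain all of $B$, which immediately forces $\abs{B}\le6$. This reduces the statement to a bounded-alphabet problem and also disposes of the easy extremes: when $\abs{B}=1$ the factor $aaaaaa$ is imaged via $a\mapsto aa$, and for $\abs{B}\ge7$ there is nothing to prove since every length-$6$ factor omits a letter.

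The case $\abs{B}=6$ can be handled cleanly. Since each length-$6$ factor contains all six letters, it must use each of them exactly once, i.e.\ it is a permutation word. Comparing the window $w_iw_{i+1}\cdots w_{i+5}$ with $w_{i+1}\cdots w_{i+6}$ then forces $w_{i+6}=w_i$, so $w$ is purely periodic with a permutation period $\sigma=s_1\cdots s_6$. The cyclic-rotation morphism $m(s_j)=s_{j+1}$ (indices modulo $6$) is admissible and sends $\sigma$ to the adjacent factor $s_2\cdots s_6s_1$, so $\sigma$ is imaged, a contradiction.

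The remaining and main work is the range $2\le\abs{B}\le5$, for which I would rely on a finite exhaustive search. The key reduction is that to \emph{prove} imagedness it suffices to exhibit a \emph{single} admissible morphism, so I would fix in advance a finite catalogue $\mathcal{M}$ of simple morphisms --- the non-trivial letter permutations together with the single-letter duplications $i\mapsto ii$ --- all of image length at most $2$. The claim to verify by computer is that, for each alphabet size in this range, there is no infinite word in which every length-$6$ factor $f$ avoids having $m(f)$ as a factor for all $m\in\mathcal{M}$. Restricting attention to the finite set $\mathcal{M}$ only makes imagedness harder to achieve, so success here proves the theorem a fortiori. Because the images $m(f)$ have length at most $12$ and the set of short factors over a bounded alphabet is finite, the collection of factors that $w$ may use stabilises, and a backtracking search over extendable factor sets terminates, certifying that no such infinite word exists.

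The subtle point, and the step I expect to be the main obstacle, is that imagedness is a \emph{global} property: whether $f$ is imaged depends on occurrences of $m(f)$ that may lie arbitrarily far away, so no purely local pruning is sound. I would address this by tracking, during the search, the finite and monotonically growing set of factors of length at most $12$ already produced, and forbidding any extension that creates a pair $f,\,m(f)$ with $f$ a length-$6$ factor and $m\in\mathcal{M}$. A secondary risk is that the fixed catalogue $\mathcal{M}$ might turn out to be insufficient for some alphabet size, in which case $\mathcal{M}$ would have to be enlarged with longer images until the search closes; confirming that a \emph{finite} $\mathcal{M}$ indeed suffices is the crux that makes the computational argument go through.
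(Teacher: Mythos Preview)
Your overall strategy --- argue by contradiction, bound the alphabet, dispose of the extreme sizes analytically, and finish the middle range by computer search --- matches the paper's, but you miss its key reduction. The paper first passes (by the standard argument) to a \emph{recurrent} $w$ and then proves that every length-$6$ factor of $w$ contains every letter \emph{at least twice}: if some letter $a$ occurs exactly once in $f=pas$, recurrence supplies a factor $fvf=pasvpas$, and this equals $m(f)$ for the admissible morphism with $m(a)=asvpa$ and $m(i)=i$ otherwise. This single observation cuts the alphabet to at most $3$ (not $6$), disposes of the ternary case by a short periodicity argument analogous to your $|B|=6$ case, and in the binary case already forbids $\texttt{00000}$ and $\texttt{11111}$; the paper then sharpens this to $\texttt{0000}$ and $\texttt{1111}$ before launching the backtracking.

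Without that lemma you are left with computer searches for alphabet sizes $2$ through $5$, and your proposed catalogue $\mathcal{M}$ --- letter permutations plus single-letter duplications, all with $|m(a)|\le2$ --- is almost certainly too weak: the morphism in the paper's lemma has image of \emph{unbounded} length, and even the paper's binary backtracking, after the heavy pre-pruning above, still relies on morphisms with $|m(\texttt{0})|+|m(\texttt{1})|\ge3$ in addition to the complement. You are right that \emph{some} finite $\mathcal{M}$ must work in principle (compactness of $B^{\mathbb{N}}$ together with openness of ``contains both $f$ and $m(f)$'' yields a finite subcover), but that argument is non-constructive and gives no control on the search. The concrete missing idea that makes the proof tractable is the recurrence-based ``each letter at least twice'' lemma.
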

This naturally leads to wonder what is the minimum number of imaged factor in an infinite binary word. We also obtain a precise answer in this case.
\begin{theorem}\label{yes36}
There exist exponentially many binary words that contain at most 36 imaged factors.
\end{theorem}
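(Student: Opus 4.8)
The plan is to reduce the statement to a single, well-chosen factorial language. Call a set $L$ of finite binary words \emph{factorial} if it is closed under taking factors, and for such an $L$ put $I(L)=\{f\in L:\ \exists\text{ admissible }m,\ m(f)\in L\}$. The first step is the elementary observation that if $w$ is any binary word with $\mathrm{Fact}(w)\subseteq L$, then every imaged factor of $w$ lies in $I(L)$: indeed $f$ and $m(f)$ are then both factors of $w$, hence both members of $L$. Consequently $|I(w)|\le|I(L)|$ for all such $w$, and it suffices to construct a factorial language $L$ with (a) $|I(L)|\le 36$ and (b) exponential growth. The exponentially many binary words whose factors all lie in $L$ will then each contain at most $36$ imaged factors.

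The second step confines $I(L)$ to short words. Exactly as in Remark~\ref{rem}(1), $I(L)$ is itself factorial: if $m(f)\in L$ and $g$ is a factor of $f$, then $m(g)$ is a factor of $m(f)\in L$, so $g\in I(L)$. Hence, if no length-$7$ word belongs to $I(L)$, then every element of $I(L)$ has length at most $6$. I would therefore build $L$ so that, by the method behind Theorem~\ref{yes7}, a finite computation certifies that $L$ has no imaged factor of length $7$, and I would additionally demand that the only squares occurring in $L$ have period $1$ or $2$ (as in the Fraenkel--Simpson word) and that runs have length at most $2$. These extra constraints serve two purposes at once: they cut the trivially imaged factors — those missing a letter, by Remark~\ref{rem}(2), i.e. the powers $0^k,1^k$ — down to $\varepsilon,0,1,00,11$, and they bound the morphisms one must test.

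The third step makes $I(L)$ effectively computable, and this is where the real difficulty sits: an admissible $m$ could a priori have arbitrarily long images $m(0),m(1)$. Here the square restriction does the work. Any binary word of length $\ge 4$ contains a square $gg$, so $m(gg)=\bigl(m(g)\bigr)^2$ is a square occurring inside $m(f)\in L$; since $L$ allows only squares of period $\le 2$, the period $|m(g)|\le 2$. Running this over the squares present in the factors $f$ of length $\le 6$ bounds $|m(0)|$ and $|m(1)|$ (the borderline case being $f=(01)^2$, which only forces $|m(0)|+|m(1)|\le2$, i.e. $m$ is the letter swap), and the finitely many factors of length $\le 3$ are handled directly. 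This reduces both the length-$7$ avoidance and the exact determination of $I(L)$ to a finite search over factors of length $\le 6$ and morphisms with images of length $\le 2$, which I would carry out by computer and expect to return the list of exactly $36$ imaged factors.

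The last step is to certify exponential growth of $L$ while retaining $|I(L)|\le 36$. I would exhibit two words $u,v\in L$ of equal length such that every sequence in $\{u,v\}^\omega$ keeps all its factors inside $L$ (equivalently, a non-erasing binary morphism whose iterates produce only words with factors in $L$); this yields at least $2^{\lfloor n/|u|\rfloor}$ admissible words of length $n$, hence exponentially many. The main obstacle is the tension intrinsic to the design of $L$: tightening the forbidden factors to eliminate imaged factors and reach the optimal value $36$ tends to destroy the branching needed for positive entropy, whereas loosening them reintroduces imaged factors. Choosing the forbidden set and the pair $u,v$ so that the finite computation returns exactly $36$ \emph{and} the branching survives is the delicate point, and is where I expect the bulk of the (computer-assisted) verification to lie.
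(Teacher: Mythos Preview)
Your framework in step~1 is sound, and recasting the problem as finding a factorial $L$ with $|I(L)|\le36$ and exponential growth is a clean way to read the paper's argument too. But step~4 directly contradicts step~1. Suppose $u\ne v$ with $|u|=|v|\ge2$ and every factor of $\{u,v\}^\omega$ lies in $L$. Then $m:\texttt{0}\mapsto u,\ \texttt{1}\mapsto v$ is admissible, and for \emph{every} $f\in L$ the word $m(f)\in\{u,v\}^*$ is a factor of some element of $\{u,v\}^\omega$, hence $m(f)\in L$. This forces $I(L)=L$, which is infinite: your proposed source of entropy is precisely a machine that images every factor of $L$. The case $|u|=|v|=1$ is no better: either $(u,v)=(\texttt{0},\texttt{1})$ and there is no branching, or the complement morphism does the same damage. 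No choice of $u,v$ rescues this; any binary self-substitution whose orbit stays inside $L$ yields the same conclusion.

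The paper avoids the trap by generating its exponential family through a \emph{ternary}-to-binary $342$-uniform morphism applied to the exponentially many ternary $\tfrac74^+$-free words; since this is not a binary endomorphism, it does not hand you an admissible $m$ for free. The remainder of the construction also differs substantially from your sketch: the forbidden set is $\{\texttt{010},\texttt{101},\texttt{111},\texttt{1001},\texttt{00000}\}$ rather than $\{\texttt{000},\texttt{111}\}$, three of the $36$ imaged factors have length~$7$, and the bound on morphisms comes from $\bigl(\tfrac{1321}{684}^+,245\bigr)$-freeness, giving only $|m(\texttt{0})|,|m(\texttt{1})|\le244$ (not $\le2$), with a correspondingly larger computer search. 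A smaller gap in your step~3: a factor such as $\texttt{00100}$ avoids $\texttt{000}$ and $\texttt{111}$ yet contains only the square $\texttt{00}$, so your argument bounds $|m(\texttt{0})|$ but not $|m(\texttt{1})|$; the borderline is not only $f=\texttt{0101}$.
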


\begin{theorem}\label{no35}
Every infinite binary word contains at least 36 imaged factors.
\end{theorem}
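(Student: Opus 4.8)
The plan is to reduce the theorem to a statement about finite words and then settle it by a terminating search. For a finite binary word $u$, write $I(u)$ for its set of imaged factors, i.e.\ the factors $f$ of $u$ for which some admissible morphism $m$ makes $m(f)$ a factor of $u$. The crucial observation is \emph{monotonicity}: if $u$ is a factor of $u'$ then $I(u)\subseteq I(u')$, since any witnessing pair $f,m(f)$ lying inside $u$ also lies inside $u'$. In particular, along the prefixes $p_0,p_1,p_2,\dots$ of an infinite word $w$ the quantity $\abs{I(p_n)}$ is non-decreasing, and $I(w)=\bigcup_n I(p_n)$, because any two factors $f,m(f)$ of $w$ already occur together in a long enough common prefix. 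Hence $w$ has at most $35$ imaged factors if and only if every prefix of $w$ does.

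\textbf{The tree of candidate counterexamples.} I would then consider the tree $T$ whose nodes are the binary words $u$ with $\abs{I(u)}\le 35$, a child being obtained by appending one letter. This tree is finitely branching, and by the equivalence above an infinite binary word with at most $35$ imaged factors exists precisely when $T$ has an infinite path, i.e.\ (by König's lemma) precisely when $T$ is infinite. So the theorem is equivalent to the assertion that $T$ is finite, and it suffices to exhibit a length bound $N_0$ beyond which no binary word $u$ has $\abs{I(u)}\le 35$; monotonicity then forces $\abs{I(w)}\ge 36$ for every infinite $w$. Before searching I would dispose of the degenerate words: if $w$ is eventually constant or has arbitrarily long runs of a single letter, then by Remark~\ref{rem}(2) it already has infinitely many unary imaged factors, so I may assume $w$ uses both letters with runs of bounded length, and, using the swap symmetry $0\leftrightarrow 1$ (which preserves $\abs{I(\cdot)}$), that $w$ begins with $0$.

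\textbf{Effectivity and the search.} The node test is effective: for a nonempty factor $f$ of $u$, any witnessing morphism satisfies $\abs{m(a)}\le \abs{u}$ for every letter $a$ occurring in $f$, so only finitely many admissible morphisms must be tried and $\abs{I(u)}$ is computable. The search itself is a depth-first backtracking that extends the current word by $0$ or $1$, recomputes $I$, and prunes as soon as the count reaches $36$, since such a word and all its extensions cannot be a prefix of a counterexample. To keep this tractable I would exploit downward closure (tracking only the maximal imaged factors), the fact that every unary factor is automatically imaged, and the swap symmetry. Running the search to exhaustion yields the finite tree $T$ and the bound $N_0$; as a consistency check, the prefixes of the word built for Theorem~\ref{yes36} must survive arbitrarily deep with count exactly $36$, confirming that the threshold is sharp.

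\textbf{The main obstacle.} The conceptual difficulty is that being \emph{imaged} is a property relative to the whole infinite word, with $m(f)$ possibly appearing arbitrarily far to the right; the monotonicity lemma is exactly what localizes the notion so that a purely prefix-based finite search is conclusive. The practical difficulty is termination with an explicit, verifiable bound: a priori one must rule out an infinite family of ever-longer words hovering at $35$ imaged factors, and the only guarantee that the backtracking halts is that $T$ is genuinely finite. The main effort therefore goes into certifying the bounded-image reduction so that each node test is a finite check, and organizing the enumeration, via the swap symmetry and maximal-factor bookkeeping above, so that the search provably exhausts and returns $N_0$ rather than running forever.
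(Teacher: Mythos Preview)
Your proposal is correct and is essentially the paper's approach: the paper's entire proof of \Cref{no35} is the single sentence ``\Cref{no35} can be verified with a simple exhaustive search,'' together with a pointer to a program in the associated repository. Your write-up supplies precisely the justification (monotonicity of $I(\cdot)$ along prefixes, the K\"onig reduction to finiteness of the tree, effectivity of the node test via the bound $\abs{m(a)}\le\abs{u}$, and the swap symmetry) that turns such a terminating search into a valid proof, so there is nothing to correct.
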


Now let us give useful standard definitions.
A \emph{repetition} $w$ is a finite prefix of $u^\omega$ such that $u$ is a non-empty finite word and $|w|>|u|$.
The \emph{period} of $w$ is $|u|$ and the \emph{exponent} of $w$ is $\tfrac{|w|}{|u|}$.
A word is $(\beta^+,n)$-free if it contains no repetition $u^e$
with $|u|\ge n$ and $e>\beta$. A word is $\beta^+$-free if it is $(\beta^+,1)$-free.
A morphism~$f:\Sigma^*\rightarrow\Delta^*$ is \emph{$q$-uniform} if $|f(a)|=q$ for every $a\in\Sigma$, and $f$ is \emph{synchronizing}
if for all $a,b,c\in\Sigma$ and $u,v\in \Delta^*$, such that $f(ab)=uf(c)v$, then either $u=\varepsilon$ and $a=c$, or $v=\varepsilon$ and $b=c$.

The proofs of~\Cref{yes7,yes36} use the following lemma.

\begin{lemma}\label{sync}~\cite{Ochem(2006)}
Let $\alpha,\beta\in\mathbb{Q},\ 1<\alpha<\beta<2$ and $n\in\mathbb{N}^*$.
Let $h:\Sigma^*_s\rightarrow\Sigma^*_e$ be a synchronizing $q$-uniform morphism (with $q\ge 1$).
If $h(w)$ is $\paren{\beta^+,n}$-free for every $\alpha^+$-free word $w$ such that
$|w|<\max\paren{\frac{2\beta}{\beta-\alpha},\frac{2(q-1)(2\beta-1)}{q(\beta-1)}}$,
then $h(t)$ is $\paren{\beta^+,n}$-free for every (finite or infinite) $\alpha^+$-free word $t$.
\end{lemma}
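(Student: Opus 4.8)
The plan is to argue by contraposition through a minimal counterexample. Suppose that $t$ is an $\alpha^+$-free word (finite or infinite) for which $h(t)$ is \emph{not} $\paren{\beta^+,n}$-free, so that $h(t)$ contains a repetition $r$ of period $p\ge n$ and exponent $e>\beta$. Shrinking $r$, I may assume its exponent is as close to $\beta$ as the period permits, so that $\abs{r}=\floor{\beta p}+1$. Among all factors $w$ of $t$ with $h(w)$ containing $r$, I pick one of minimal length. Then $w$ is itself $\alpha^+$-free, and by minimality the first and last letters of $w$ each contribute at least one position of $r$; since $h$ is $q$-uniform, this forces $q\abs{w}=\abs{h(w)}\le\abs{r}+2(q-1)$. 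The goal is to show $\abs{w}$ lies below the stated threshold, so that the hypothesis of the lemma applies to $w$ and contradicts the fact that $h(w)$ contains $r$.

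The heart of the argument is to transfer the period of $r$ back to $w$ using synchronization. First I would show $q\mid p$: pick a block boundary $j$ (a multiple of $q$) inside $r$ for which the whole block $h(a)$ starting at $j$, together with its period-copy at $j+p$, lies inside $r$. The period forces $h(a)$ to occur at position $j+p$; if $j+p$ were not a multiple of $q$, this occurrence would straddle two consecutive blocks $h(b)h(c)$, exhibiting $h(a)$ as an internal factor of $h(bc)$ with nonempty parts on both sides, which the synchronizing property forbids. Hence $q\mid p$, say $p=qp'$. Because a synchronizing morphism is injective on letters (as one checks by applying the definition to $h(aa)$), the block-aligned relations $h(w_{\ell+1})=h(w_{\ell+1+p'})$ descend to $w_{\ell+1}=w_{\ell+1+p'}$ for every block $\ell$ such that both blocks $\ell$ and $\ell+p'$ lie fully inside $r$. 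Thus $w$ contains a repetition of period $p'$ whose length is $\abs{r}/q$ minus a bounded number of boundary blocks; a short count shows its exponent is at least $e-\tfrac{2q}{p}$, so the lift preserves the exponent up to an $O(q/p)$ loss.

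With this machinery I would split on the size of $p$. The synchronizing lift can be \emph{run} only when $r$ contains an aligned interior block together with its period-copy, which needs $\abs{r}\ge p+2q-1$ and hence $p\gtrsim\tfrac{2(q-1)}{\beta-1}$; and once run, it yields a contradiction only if the lifted exponent $e-\tfrac{2q}{p}$ exceeds $\alpha$, i.e.\ $p\gtrsim\tfrac{2q}{\beta-\alpha}$. Thus whenever $p$ is at least the maximum of these two quantities, $w$ contains a repetition of exponent $>\alpha$, contradicting its $\alpha^+$-freeness, so that regime is vacuous. For every smaller $p$ I bound $\abs{w}$ directly from $q\abs{w}\le\abs{r}+2(q-1)=\floor{\beta p}+2q-1$; evaluating this at the two thresholds produces the two competing upper bounds on $\abs{w}$, whose maximum is exactly $\max\paren{\tfrac{2\beta}{\beta-\alpha},\tfrac{2(q-1)(2\beta-1)}{q(\beta-1)}}$. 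Either way $w$ is an $\alpha^+$-free word shorter than the stated bound whose image contains the forbidden repetition $r$, contradicting the hypothesis.

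I expect the delicate part to be the exact bookkeeping of the boundary blocks. The crude estimates above reproduce the two thresholds only up to additive $O(1/q)$ terms; to land precisely on $\tfrac{2\beta}{\beta-\alpha}$ and $\tfrac{2(q-1)(2\beta-1)}{q(\beta-1)}$ one must track the floors and the exact number of surviving interior blocks, and check that the two regimes glue together without a gap, so that the maximum of the two thresholds is also the point beyond which the lift is guaranteed to contradict $\alpha^+$-freeness (this is where the $\beta-1$ in the second denominator, governing whether a block fits inside a low-exponent repetition, interacts with the $\beta-\alpha$ controlling the exponent loss). Everything else is the standard routine of lifting a period through a synchronizing uniform morphism.
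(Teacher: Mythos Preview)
The paper does not prove this lemma: it is quoted from the reference~\cite{Ochem2004} and used as a black box in the constructions of \Cref{yes7,yes36}. There is therefore no in-paper argument to compare your proposal against.

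That said, your sketch follows the standard route and is essentially the argument of the cited source: take a minimal preimage $w$ of a forbidden repetition $r$, use synchronization to show $q\mid p$ and lift the period back to $w$, then split on whether the lifted repetition already violates $\alpha^+$-freeness of $w$ or whether $p$ is small enough to bound $\abs{w}$ directly. Your derivation of the second threshold $\tfrac{2(q-1)(2\beta-1)}{q(\beta-1)}$ from $q\abs{w}\le\beta p+2(q-1)$ evaluated at $p=\tfrac{2(q-1)}{\beta-1}$ is exactly right. For the first threshold, the cleaner accounting is not to threshold on $p$ and substitute, but to observe that the lifted repetition in $w$ has period $p'=p/q$ and length essentially $\abs{w}$ (losing at most one letter at each boundary), so $\alpha^+$-freeness of $w$ forces $\abs{w}\le\alpha p'+2$, while minimality of $w$ gives roughly $\abs{w}\ge\beta p'$; combining yields $p'\le\tfrac{2}{\beta-\alpha}$ and hence $\abs{w}\le\alpha\cdot\tfrac{2}{\beta-\alpha}+2=\tfrac{2\beta}{\beta-\alpha}$. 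This is precisely the bookkeeping you flag as delicate, and once carried out carefully it lands on the stated constants without the $O(1/q)$ slack.
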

To obtain~\Cref{yes7,yes36}, we use Lemma~\ref{sync} such that the pre-images are ternary $\tfrac74^+$-free words.
Since we know that there exist exponentially many ternary $\tfrac74^+$-free words~\cite{Ochem(2006)},
this proves that there exist indeed exponentially many of the binary words considered in~\Cref{yes7,yes36}.

We provide in the associated git repository a program that verifies if Lemma~\ref{sync} can be applied to a given morphism \cite{giteRepo(2025)}.
 
\section{Avoiding imaged factors of length 7}\label{s7}
Now we prove \Cref{yes7}.
Let $w$ be the image of any ternary $\tfrac74^+$-free word by the following 37-uniform morphism.
\begin{align*}
\texttt{0}&\rightarrow\texttt{0001110101001100011101000110101001101}\\
\texttt{1}&\rightarrow\texttt{0001110101000110100110001110100110101}\\
\texttt{2}&\rightarrow\texttt{0001110100110001110101000110101001101}
\end{align*}
This morphism was found using the method described in~\cite{Ochem(2006)}.
The following properties of $w$ can be checked by standard techniques and~\Cref{sync}.
\begin{itemize}
\item[(a)] $w$ contains no factor in $F=\acc{\texttt{0000},\texttt{1111},\texttt{0010},\texttt{1011},\texttt{010101}}$.\label{itemF}
\item[(b)] $w$ is $\paren{\tfrac{289}{148}^+, 3}$-free and contains no square other than \texttt{00}, \texttt{11}, \texttt{0101}, and \texttt{1010}.\label{itemSQo}
\item[(c)] $w$ does not contain both a factor $f$ of length 7 and $\overline{f}$
(where $\overline{f}$ is the bit-complement of $f$, that is, the image of $f$ by $\texttt{0}\rightarrow\texttt{1}$; $\texttt{1}\rightarrow\texttt{0}$).\label{itemC}
\item[(d)] Every factor $f$ of length 7 of $w$ contains at least one of the following:\label{itemSQi}
\begin{itemize}
\item the factor \texttt{0101},
\item the factor \texttt{1010},
\item both the factors \texttt{00} and \texttt{11}.
\end{itemize}
\end{itemize}
Let us show that $w$ does not contain two distinct factors $f$ and $m(f)$ such that $|f|=7$ and $m$ is a non-erasing morphism.
To write a morphism $m$, we use the notation $m(\texttt{0})/m(\texttt{1})$.

By (d), both letters \texttt{0} and \texttt{1} are contained in a square of $f$.
The $m$-images of these squares are constrained by (b), so that
$|m(\texttt{0})|\le2$ and $|m(\texttt{1})|\le2$.
Here are further restrictions on~$m$:
\begin{itemize}
\item The identity morphism $\texttt{0}/\texttt{1}$ is ruled out by definition.
\item The bit-complement morphism $\texttt{1}/\texttt{0}$ is ruled out by (c).
\item $m(\texttt{0})=m(\texttt{1})$ is ruled out since otherwise $m(f)=\paren{m(\texttt{0})}^7$ would contain
\texttt{0000}, \texttt{1111}, or \texttt{010101}, contradicting (a)).
\end{itemize}
In particular, $|m(\texttt{0})|+|m(\texttt{1})|\ge3$. So $f$ contains neither \texttt{0101} nor \texttt{1010},
since otherwise $m(f)$ would contain a square with period at least 3, contradicting (b).
By (d), $f$ contains both \texttt{00} and \texttt{11}.
This implies one more restriction on~$m$:
\begin{itemize}
\item $\acc{m(\texttt{0}),m(\texttt{1})}\cap\acc{\texttt{00},\texttt{11}}=\emptyset$ since otherwise $m(f)$ would contain \texttt{0000} or \texttt{1111}.
\end{itemize}
 
So $m$ is in $S_m=\{
\texttt{0}/\texttt{01},\ 
\texttt{0}/\texttt{10},\ 
\texttt{1}/\texttt{01},\ 
\texttt{1}/\texttt{10},\ 
\texttt{01}/\texttt{0},\ 
\texttt{01}/\texttt{1},\ 
\texttt{10}/\texttt{0},\ 
\texttt{10}/\texttt{1},\ 
\texttt{01}/\texttt{10},\ 
\texttt{10}/\texttt{01}\}$
and $f$ is in $S_f=\{
\texttt{0001101},
\texttt{0001110},
\texttt{0011000},
\texttt{0011101},
\texttt{0100011},
\texttt{0100110},
\texttt{0110001},
\texttt{1000110},\\
\texttt{1000111},
\texttt{1001100},
\texttt{1001101},
\texttt{1100011}\}$.
Finally, we check that for every $m\in S_m$ and every $f\in S_f$,
the image $m(f)$ contains a factor in $F$, and thus is not a factor of $w$.


 

\section{Imaged factors of length 6 are unavoidable}\label{s6}
This section is devoted to \Cref{not6}.
Suppose that \Cref{not6} is false, that is, there exists an infinite word $w$ over a finite alphabet that 
does not contain two factors $f$ and $m(f)$ such that $|f|=6$ and $m$ is an admissible morphism.
Notice that the conditions on $f$ and $m(f)$ define a factorial language.
By a standard argument, we can assume that $w$ is recurrent, that is, all its finite factors
appear infinitely often.

\begin{lemma}\label{twice}
Every factor of length 6 of $w$ contains every letter of $w$ at least twice.
\end{lemma}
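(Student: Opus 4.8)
The plan is to exploit the two parts of \Cref{rem} together with the failure hypothesis. First I would record what the assumption that $w$ has no imaged factor of length $6$ buys us. By part~(2) of \Cref{rem}, any factor omitting some letter of $w$ is imaged; hence every factor of length $6$ must contain \emph{every} letter of $w$, so the alphabet of $w$ has size at most $6$. Moreover, by part~(1) of \Cref{rem} imaged-ness passes to factors, so if some factor of length at least $6$ were imaged, one of its length-$6$ factors would be imaged, contradicting the hypothesis; thus \emph{no} factor of length $\ge 6$ is imaged. Applying part~(2) once more, $w$ contains no $a$-free factor of length $6$ for any letter $a$: equivalently every window of length $6$ meets every letter, so consecutive occurrences of a fixed letter lie at distance at most $6$.

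It then remains to rule out that some letter $a$ occurs exactly once in some factor $g$ of length $6$, and I would first translate this into local structure. Writing $g=x\,a\,y$ with $x,y$ the ($a$-free) parts on either side of the unique $a$ and looking at the occurrences of $a$ flanking $g$, the overlapping length-$6$ windows force the gap between $a$ and its neighbouring occurrences to be large: one obtains a factor $a\,u\,a$ in which $u$ is $a$-free and $|u|\ge 3$, while $|u|\le 5$ by the run bound of the previous paragraph. In the extreme case the single occurrence sits at an end of $g$ and one is forced to have $|u|=5$, i.e.\ a maximal block $a\,u\,a$ with $u$ of length $5$ over $\Sigma\setminus\{a\}$.

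The contradiction is produced by building an admissible morphism that sends a length-$6$ factor to another factor of $w$. The representative case is instructive: over a binary alphabet, a unique occurrence of $1$ in a window yields a factor $1\,0^5\,1$, hence a length-$6$ factor $f=0^5 1$; letting $c$ be the letter following the trailing $1$ of this block in some occurrence (which exists by recurrence) and taking the admissible morphism $m$ with $m(0)=0$ and $m(1)=1c$, the image $m(f)=0^5\,1\,c$ is exactly the factor of $w$ extending $0^5 1$, so $f$ is imaged, contradicting that no length-$6$ factor is imaged. I would carry out the analogous construction in general, either doubling the isolated letter or substituting it by an adjacent letter, so that $m(f)$ matches a genuine extension of $a\,u\,a$. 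The main obstacle, and the bulk of the work, is the finite case analysis on the gap length $|u|$ and on the letters adjacent to the isolated occurrence: for the shorter gaps one must choose the morphism so that its image coincides with an \emph{actually occurring} factor rather than an arbitrary word, and this is where recurrence, together with the bound forbidding $a$-free (and more generally long, imaged) factors of length $\ge 6$, is used to eliminate the remaining configurations.
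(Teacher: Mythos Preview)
Your opening paragraph is fine and matches the paper: Remark~\ref{rem}(2) forces every length-$6$ factor to contain every letter, and Remark~\ref{rem}(1) propagates imaged-ness downward. The trouble is in the second half, where you try to eliminate a factor $f=xay$ with a unique~$a$ by a local ``extend by one letter'' trick. That trick only works when the isolated letter sits at an \emph{endpoint} of the window: if $f=0^51$ then $m(1)=1c$ gives $m(f)=0^51c$, which is genuinely the next length-$7$ factor of~$w$. But when both $x$ and $y$ are nonempty, neither ``doubling'' $a\mapsto aa$ nor ``appending an adjacent letter'' $a\mapsto ac$ yields a factor of $w$: you would need $x\,aa\,y$ or $x\,ac\,y$ to occur, and nothing guarantees this. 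So the ``finite case analysis'' you defer is not merely bulk work---those local moves do not close the remaining cases, over binary or over larger alphabets (which you have not yet bounded to size~$\le 3$; that comes \emph{after} the lemma).

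The paper sidesteps all of this with a single use of recurrence that you mention only vaguely. Write $f=pas$ with $p,s$ $a$-free. Since $w$ is recurrent it contains $fvf=pasvpas$ for some $v$. Define $m$ by $m(a)=asvpa$ and $m(i)=i$ for $i\ne a$; then $m$ is admissible (non-erasing, and not the identity because $|m(a)|\ge 2$) and
\[
m(f)=m(p)\,m(a)\,m(s)=p\cdot asvpa\cdot s=pasvpas=fvf,
\]
which is a factor of $w$. This one line handles every alphabet size and every position of the unique $a$ simultaneously, with no case analysis. The key idea you are missing is to let the morphism absorb an \emph{entire} return word to $a$ rather than just one neighbouring letter.
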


\begin{proof}
By remark~\ref{rem}.(ii), every factor of length 6 of $w$ contains every letter of $w$ at least once.
Now, we rule out that a factor $f$ of length 6 of $w$ contains a letter $a$ exactly once.
We write $f=pas$ where $p$ and $s$ are possibly empty words that do not contain $a$.
Since $w$ is recurrent, it contains a factor $fvf=pasvpas$ for some word $v$.
Notice that $pasvpas=m(pas)$ where $m$ is the admissible morphism
such that $m(a)=asvpa$ and $m(i)=i$ for every other letter $i$.
This is a contradiction since $w$ contains both $f$ and $m(f)$.
\end{proof}

Now we consider the size $s$ of the alphabet of $w$. \Cref{twice} implies $s\le3$.
We can rule out $s=1$ since $\texttt{0}^{12}$ is the image of $\texttt{0}^6$
by the  admissible morphism $\texttt{0}\mapsto\texttt{00}$.
If $s=3$, then \Cref{twice} implies that every factor of $w$ of length 6
contains each of the three letters exactly twice.
To ensure this property of the factors $w_kw_{k+1}w_{k+2}w_{k+3}w_{k+4}w_{k+5}$
and $w_{k+1}w_{k+2}w_{k+3}w_{k+4}w_{k+5}w_{k+6}$, we must have $w_k=w_{k+6}$.
So $w$ is periodic with period 6. Let $f$ be any factor of $w$ of length~6.
Then $w$ contains $f^6=m(f)$, where $m$ is the admissible morphism such that
$m(\texttt{0})=m(\texttt{1})=m(\texttt{2})=f$.
This contradiction rules out $s=3$.

There remains the case $s=2$, that is, $w$ is a binary word.
Let us show that $w$ does not contain \texttt{0000}.
By \Cref{twice}, \texttt{0000} can only extend to \texttt{11000011}.
However $\texttt{11000011}=m(\texttt{100001})$ with $m(\texttt{0})=\texttt{0}$ and $m(\texttt{1})=\texttt{11}$.
So $w$ avoids \texttt{0000}, and by symmetry, $w$ avoids \texttt{1111}.
Now, we describe the computation that checks that no such infinite binary word $w$ exists.
It is a backtracking algorithm which backtracks in the following situations:
\begin{itemize}
\item The suffix of the current word is \texttt{0000} or \texttt{1111}.
\item The current word contains the complement $\overline{s}$ of its suffix $s$ of length 6.
\item For $f\in\{\texttt{010101},\texttt{001100},\texttt{001001},\texttt{011011},\texttt{001010},\texttt{010100},\texttt{011101},\texttt{010001},\\
\texttt{011100},\texttt{001110},\texttt{011000},\texttt{000110},\texttt{010111},\texttt{000101},\texttt{010110},\texttt{011010}\}$,
the current word contains either $f$ or $\overline{f}$, and also contains as a suffix an image $m(f)$ such that $|m(\texttt{0})|+|m(\texttt{1})|\ge3$.
\end{itemize}
This backtrack finishes\footnote{The program we used is available in the associated git repository~\cite{giteRepo(2025)}.},
which implies that every binary word contains an imaged factor of length 6. This concludes the proof of~\Cref{not6}.

\section{Binary words with only 36 imaged factors}
Let us prove~\Cref{yes36}.
Let $w$ be the image of any ternary $\tfrac74^+$-free word by the following 342-uniform morphism.
\begin{align*}
&\texttt{0}\rightarrow p\texttt{1100011000011000011000110001100001100011000110000110000110001}\\
&\texttt{100011000011000110001100001100011000110000110000110001100011000011}\\
&\texttt{000110001100001100001100011000110000110001100011000011000011000110}\\
&\texttt{00110000110000110001100011000011000110001100001100001100011000110}\\
\end{align*}
\begin{align*}
&\texttt{1}\rightarrow p\texttt{0110001100011000011000011000110001100001100011000110000110000}\\
&\texttt{110001100011000011000110001100001100011000110000110000110001100011}\\
&\texttt{000011000110001100001100001100011000110000110001100011000011000110}\\
&\texttt{00110000110000110001100011000011000110001100001100001100011000110}\\
&\texttt{2}\rightarrow p\texttt{0110001100011000011000011000110001100001100011000110000110000}\\
&\texttt{110001100011000011000011000110001100001100011000110000110000110001}\\
&\texttt{100011000011000110001100001100001100011000110000110001100011000011}\\
&\texttt{00011000110000110000110001100011000011000110001100001100001100011}
\end{align*}
where $p=\texttt{000110000110001100011000011000110001100001100001100011000}$\\
\texttt{110000110001100011000011000}.

The following properties can be checked by standard techniques and~\Cref{sync}.
\begin{itemize}
\item[(a)] $w$ avoids the factors in $F=\acc{\texttt{010},\texttt{101},\texttt{111},\texttt{1001},\texttt{00000}}$.\label{ff}
\item[(b)] $w$ is $\paren{\tfrac{1321}{684}^+, 245}$-free.\label{2p}
\end{itemize}
First, we consider the set $T=\{
\texttt{0000110},
\texttt{0110000},
\texttt{0110001},
\texttt{1000011},
\texttt{1000110},
\texttt{1100001},\\
\texttt{00011000}\}$ of factors of $w$ and we show that these factors are not imaged in $w$.
Suppose for contradiction that $f$ is a word in $T$ and that $m$ is an admissible morphism such that $m(f)$ is a factor of $w$.
Notice that \texttt{00} and \texttt{11} are factors of $f$, so that $|m(\texttt{0})|\le244$ and $|m(\texttt{1})|\le244$ by~(b).
Then a computer check shows that $f$ is not imaged in $w$.

By~(a), the factors in $T'=F\cup T=\{
\texttt{010},
\texttt{101},
\texttt{111},
\texttt{1001},
\texttt{00000},
\texttt{0000110},
\texttt{0110000},\\
\texttt{0110001},
\texttt{1000011},
\texttt{1000110},
\texttt{1100001},
\texttt{00011000}\}$ are not imaged in $w$.

Let $I=\{\varepsilon,
\texttt{0},
\texttt{00},
\texttt{000},
\texttt{0000},
\texttt{00001},
\texttt{000011},
\texttt{0001},
\texttt{00011},
\texttt{000110},
\texttt{0001100},
\texttt{001},
\texttt{0011},\\
\texttt{00110},
\texttt{001100},
\texttt{0011000},
\texttt{01},
\texttt{011},
\texttt{0110},
\texttt{01100},
\texttt{011000},
\texttt{1},
\texttt{10},
\texttt{100},
\texttt{1000},
\texttt{10000},
\texttt{100001},\\
\texttt{10001},
\texttt{100011},
\texttt{11},
\texttt{110},
\texttt{1100},
\texttt{11000},
\texttt{110000},
\texttt{110001},
\texttt{1100011}\}$ be the set of words that avoid $T'$.
Then every imaged factor of $w$ is in $I$. Thus $w$ contains at most $|I|=36$ imaged factors.

Finally,~\Cref{no35} can be verified with an exhaustive search. If there exists a right-infinite word $z$ with at most 35 imaged factors,
then $z$ contains \texttt{001} or \texttt{110}, since otherwise $z$ would contain $(\texttt{01})^\omega$, $\texttt{0}^\omega$, or $\texttt{1}^\omega$ as a suffix.
So without loss of generality, we assume that \texttt{001} is a prefix of $z$.
Then, we perform depth-first exploration of the binary words with prefix \texttt{001} using this straightforward procedure:
we backtrack if the current word $w$ contains at least $36$ imaged factors, and we try to extend $w$ otherwise.
Since this backtracking program ends, there are no infinite binary words with at most 35 imaged factors.
To count the number of imaged factors faster, we pre-compute the set $A$ of all the factors of $w$ and the set $S_{uu}$ (resp. $C_{uuu}$)
of the factors $u$ of $w$ such that $uu$ (resp. $uuu$) is also a factor of $w$.
Now, if $f$ is a factor of $w$ containing the factor \texttt{00} and we look for an image $m(f)$ that is also a factor of $w$, then we know that $m(\texttt{0})\in S_{uu}$.
Since $S_{uu}$ is much smaller than $A$, the search for $m$ is faster.
We provide in the associated git repository a program that implements this exhaustive search~\cite{giteRepo(2025)} and runs in a few minutes on an ordinary laptop.

\section{Concluding remarks}
In this paper, we have considered infinite words that aim at limiting the amount of imaged factors.
This rules out pure morphic words (i.e., fixed points of an admissible morphism),
since every factor of a pure morphic word is imaged by its defining morphism.
However, these words can be morphic.
Indeed, our words are constructed as morphic images of arbitrary ternary $\tfrac74^+$-free words and
Dejean~\cite{Dejean(1972)} constructed a pure morphic ternary $\tfrac74^+$-free word.

One may consider a more extreme version of the notion in this paper by allowing
$f$ to be any binary word, not necessarily a factor of the considered infinite word $w$ itself.
We notice that every binary word, and in particular every factor of $w$, is the complement
of a binary word. So again, to avoid trivialities, we have to put another constraint on the morphisms.
The \emph{complentary morphism} is $\texttt{0}\rightarrow\texttt{1}$; $\texttt{1}\rightarrow\texttt{0}$.
A morphism is \emph{neat} if it is non-erasing, distinct from the identity, and distinct from the complementary morphism.
Given an infinite binary word $w$, we say that a finite binary word $b$ is \emph{realized}
in $w$ if there exists a neat morphism $m$ such that $m(b)$ is a factor of $w$.
Thus, $b$ can be realized in $w$ even if $b$ is not a factor of $w$.
We also notice that if $b$ is realized in $w$, then every factor of $b$ is realized in $w$.

Consider any infinite binary word $w$ that contains no squares other than
\texttt{00}, \texttt{11}, and \texttt{0101}~\cite{Fraenkel(1995),Gabric(2021)}. In particular, $w$ contains no 4-power.
We can quickly check that there exist only finitely many words that are realized in $w$. 
Consider a square $u$ with period at least 2, that is, $u=vv$ with $p=|v|\ge2$.
Suppose that $u$ is realized in $w$, that is, there exists a neat morphism $m$
such that $m(u)$ is a factor of $w$. First, we rule out the case where $v$ does not contain
both \texttt{0} and \texttt{1}, since $u$ and $m(u)$ would contain a 4-power.
Similarly, we rule out the case where $m$ is of the form 
$\texttt{0}\rightarrow x; \texttt{1}\rightarrow x$, since $m(u)=x^{2p}$ would contain~$x^4$.
Since $m$ is neat, this implies that $|m(\texttt{0})|+|m(\texttt{1})|\ge3$
and thus $|m(v)|\ge3$. Therefore, $w$ contains $m(u)=m(v)m(v)$, which is a square with period at least~3.
This contradiction shows that words realized in $w$ do not contain a square with period at least 2.
It is well known that the length of a binary word with no square with period at least 2 is at most 18.
Therefore, there are finitely many words realized in $w$.

Similarly to the results in this paper, we can ask for infinite binary words
that optimally bound the length and the number of realized words.

\nocite{*}
\bibliographystyle{abbrv}
\bibliography{bib}
\label{sec:biblio}

\end{document}